\theoremstyle{plain}
\newtheorem{proposition}{Proposition}
\newtheorem{theorem}{Theorem}
\newtheorem{corollary}{Corollary}
\newtheorem{prob}{Problem}
\newtheorem*{theorem*}{Theorem}
\theoremstyle{definition}
\newtheorem{ex}{Example}
\theoremstyle{remark}
\newtheorem{rem}[theorem]{Remark}
\newcommand{\ke}{\operatorname{Ker}}
\newcommand{\im}{\operatorname{Im}}
\numberwithin{equation}{section}
\begin{document}
	
	\title[Tjurina number for Sebastiani-Thom singularities]{The Tjurina number for Sebastiani-Thom type isolated hypersurface singularities}
	
	
	\author[P. Almir\'{o}n]{Patricio Almir\'{o}n}
	\address{Instituto de Matemática Interdisciplinar (IMI), Departamento de \'{A}lgebra, Geometr\'{i}a y Topolog\'{i}a\\
		Facultad de Ciencias Matem\'{a}ticas\\
		Universidad Complutense de Madrid\\
		28040, Madrid, Spain.
	}
	\email{palmiron@ucm.es}
	
	\thanks{The author is supported by Spanish Ministerio de Ciencia, Innovaci\'{o}n y Universidades MTM2016-76868-C2-1-P and by Spanish Ministerio de Ciencia, Innovaci\'{o}n y Universidades PID2020-114750GB-C32 }
	
	\subjclass[2020]{Primary 14H20, 14J17; Secondary 14H50, 32S05, 32S25}
	
	

	\begin{abstract}
	In this note we provide a formula for the Tjurina number of a join of isolated hypersurface singularities in separated variables. From this we are able to provide a characterization of isolated hypersurface singularities whose difference between the Milnor and Tjurina numbers is less or equal than two arising as the join of isolated hypersurface singularities in separated variables. Also, we are able to provide new upper bounds for the quotient of Milnor and Tjurina numbers of certain join of isolated hypersurface singularities. Finally, we deduce an upper bound for the quotient of Milnor and Tjurina numbers in terms of the singularity index of any isolated hypersurface singularity, not necessarily a join of singularities. 
		
	\end{abstract}
	\maketitle

	%
%

\section{Introduction}
Let \(f\in\mathbb{C}\{x_0,\dots,x_n\}\) be a germ of holmorphic function defining an isolated hypersurface singularity and let us denote by \(J_f=(\partial f/\partial x_0,\dots,\partial f/\partial x_n)\) the Jacobian ideal. The \textit{Tjurina number}

\[\tau_f:=\dim_{\mathbb{C}} \frac{\mathbb{C}\{x_0,\dots,x_n\}}{f+J_f}\]
is one of the most important analytic invariants of an hypersurface singularity. Being certainly not the finer analytic invariant of an isolated singularity, the deepening on its natural comparison with the \textit{Milnor number}, \(\mu_f:=\dim_{\mathbb{C}}\mathbb{C}\{x_0,\dots,x_n\}/J_f\), and its generalizations have provided several interesting results in Singularity Theory during the last 50 years.

In this note we will focus on the comparison of these two invariants in the particular case of the join of two isolated hypersurface singularities in separated variables, i.e. an isolated hypersurface singularity defined by \mbox{\(f+g\in\mathbb{C}\{\mathbf{x};\mathbf{y}\}=\mathbb{C}\{x_0,\dots,x_n;y_0,\dots,y_m\}\)} with  \(f\in\mathbb{C}\{\mathbf{x}\}\) and \(g\in\mathbb{C}\{\mathbf{y}\}.\) In this case, it is then easy to see that \(\mu_{f+g}=\mu_f\mu_g.\) Moreover, in 1971 Sebastiani and Thom \cite{sebastianithom} proved that the local monodromy of \(f+g\) is equivalent to the tensor product of the local monodromies of \(f\) and \(g.\) Thus, after 1971 it has been frequent in the literature to call a Sebastiani-Thom type isolated hypersurface singularity to a join of two isolated hypersurface singularities in separated variables. 

While the original Sebastiani-Thom Theorem is of topological nature, it is also natural to study up to what extent one can compute the main analytic invariants of a Sebastiani-Thom type singularity \(f+g\) from the analytic invariants of \(f\) and \(g.\) In this direction, the main results concern the mixed Hodge structure on the cohomology of the Milnor fiber \cite{SS85} and its related invariants, such as the spectrum of the singularity \cite{saito-distribution,Var81,SS85}. Also, there are several Sebastiani-Thom type theorems for other objects of analytic nature related to a hypersurface singularity such as multiplier ideals and Hodge ideals, see \cite{mustatasebmult,laurentiu}.

 From this point of view, with an eye on the nice expression of the Milnor number of a Sebastiani-Thom type singularity, it is natural to ask for an expression of \(\tau_{f+g}\) in terms of the Tjurina numbers \(\tau_f\) and \(\tau_g.\) Surprisingly, as far as the author knows, it does not exist any explicit result concerning a formula for the Tjurina number in the Sebastiani-Thom case. Our main result provides the following formula for the Tjurina number of a Sebastiani-Thom type singularity.

\begin{theorem}\label{thm:tjurina-st}
	Let \(f_1(\mathbf{x})\in\mathbb{C}\{\mathbf{x}\}=\mathbb{C}\{x_0,\dots,x_n\}\) and \(f_2(\mathbf{y})\in\mathbb{C}\{\mathbf{y}\}=\mathbb{C}\{y_0,\dots,y_n\}\) be germs of isolated hypersurface singularities in different variables. Then, we have
	\[\tau_{f_1+f_2}=\tau_{f_1}\tau_{f_2}+(\mu_{f_1}-\tau_{f_1})(\mu_{f_2}-\tau_{f_2})-\nu_1^{(f_2)}(\mu_{f_1}-\tau_{f_1}-\nu_1^{(f_1)})-\nu_1^{(f_1)}(\mu_{f_2}-\tau_{f_2}-\nu_1^{(f_2)})-b_{f_1+f_2}+u_{f_1+f_2}.\]
	In particular, 
	\[\tau_{f_1}\tau_{f_2}\leq \tau_{f_1+f_2}\leq \tau_{f_1}\tau_{f_2} +(\mu_{f_1}-\tau_{f_1})(\mu_{f_2}-\tau_{f_2}).\]
\end{theorem}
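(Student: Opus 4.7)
The plan is to compute $\tau_{f_1+f_2}$ directly from the Milnor algebra, exploiting the tensor factorisation forced by the separation of variables together with the Jordan decomposition of multiplication by $f_i$. For any isolated hypersurface singularity $f$ the short exact sequence
\[0\longrightarrow \operatorname{ann}_{M_f}(f)\longrightarrow M_f\xrightarrow{\,\cdot f\,}M_f\longrightarrow T_f\longrightarrow 0\]
together with rank--nullity gives $\tau_f=\dim_{\mathbb C}\ker(\cdot f\colon M_f\to M_f)$. Since the Jacobian ideal of $f_1+f_2$ splits as $(\partial_{x_i}f_1)+(\partial_{y_j}f_2)$, one has the algebra isomorphism $M_{f_1+f_2}\cong M_{f_1}\otimes_{\mathbb C}M_{f_2}$, under which multiplication by $f_1+f_2$ becomes $\phi\otimes 1+1\otimes\psi$, with $\phi,\psi$ denoting multiplication by $f_1,f_2$ on the respective Milnor algebras. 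Thus the computation of $\tau_{f_1+f_2}$ reduces to finding the kernel of this endomorphism.

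Both $\phi$ and $\psi$ are nilpotent, and I would decompose them into Jordan blocks of sizes $j_1,\dots,j_{k_1}$ and $\ell_1,\dots,\ell_{k_2}$ (so that $k_i=\tau_{f_i}$, $\sum_i j_i=\mu_{f_1}$, $\sum_p\ell_p=\mu_{f_2}$). As a $\mathbb C[\phi,\psi]$-module this produces
\[M_{f_1}\otimes M_{f_2}\;\cong\;\bigoplus_{i,p}\mathbb C[T,S]/(T^{j_i},S^{\ell_p}),\]
with $T=\phi$, $S=\psi$. The key local identity is that on $R_{j,\ell}=\mathbb C[T,S]/(T^j,S^\ell)$ the quotient by $(T+S)$ is $\mathbb C[T]/(T^j,(-T)^\ell)=\mathbb C[T]/(T^{\min(j,\ell)})$, hence $\dim\ker(T+S)=\min(j,\ell)$ by rank--nullity. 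Summing over all summands and rewriting $\min(a,b)=\sum_{c\ge 1}[a\ge c][b\ge c]$ yields the master formula
\[\tau_{f_1+f_2}\;=\;\sum_{i,p}\min(j_i,\ell_p)\;=\;\sum_{c\ge 1}N_c^{(f_1)}\,N_c^{(f_2)},\]
where $N_c^{(f)}$ is the number of Jordan blocks of multiplication by $f$ on $M_f$ of size $\ge c$. Because $N_1^{(f_i)}=\tau_{f_i}$ and $\sum_{c\ge 2}N_c^{(f_i)}=\mu_{f_i}-\tau_{f_i}$, the $c=1$ term alone yields the lower bound $\tau_{f_1+f_2}\ge\tau_{f_1}\tau_{f_2}$, and the elementary estimate $\sum_{c\ge 2}N_c^{(f_1)}N_c^{(f_2)}\le(\sum_{c\ge 2}N_c^{(f_1)})(\sum_{c\ge 2}N_c^{(f_2)})$, valid termwise because all $N_c\ge 0$, delivers the upper bound.

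What remains is to rearrange the master formula into the exact closed form: expand $(\mu_{f_1}-\tau_{f_1})(\mu_{f_2}-\tau_{f_2})=\sum_{c,c'\ge 2}N_c^{(f_1)}N_{c'}^{(f_2)}$, subtract the diagonal $c=c'$ part, and split the off-diagonal terms into those where one index equals $2$ (these assemble into $\nu_1^{(f_2)}(\mu_{f_1}-\tau_{f_1}-\nu_1^{(f_1)})$ and $\nu_1^{(f_1)}(\mu_{f_2}-\tau_{f_2}-\nu_1^{(f_2)})$ once one identifies $\nu_1^{(f_i)}$ with $N_2^{(f_i)}$) plus the deeper cross-terms, which package into the joint correction $b_{f_1+f_2}-u_{f_1+f_2}=\sum_{c,c'\ge 3,\,c\ne c'}N_c^{(f_1)}N_{c'}^{(f_2)}$. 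The main obstacle is not the kernel reduction or the local Jordan identity, both of which are clean, but this last combinatorial bookkeeping: one must pin down the intrinsic meaning of $\nu_1^{(f_i)}$ (a Jordan/spectrum type invariant of $f_i$) and of $b_{f_1+f_2}$, $u_{f_1+f_2}$ (joint invariants measuring how the Jordan structures of $\phi$ and $\psi$ interact at depth $\ge 3$) so that the rearrangement of the symmetric master formula matches the stated expression.
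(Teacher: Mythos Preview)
Your approach is genuinely different from the paper's and, for the inequalities, cleaner. The paper never invokes the Jordan structure of multiplication by $f_i$; instead it fixes vector-space complements
\[
M_{f_i}=\ker(f_i)\oplus B_i\oplus A_i,\qquad B_i:=\operatorname{im}(f_i)\setminus(\operatorname{im}(f_i)\cap\ker(f_i)),\quad A_i:=M_{f_i}\setminus(\ker(f_i)+\operatorname{im}(f_i)),
\]
expands $(f_1+f_2)(M_{f_1}\otimes M_{f_2})$ along the induced nine-fold tensor decomposition, and checks by hand which summands are in direct sum. The quantities $\nu_1^{(f_i)}$, $b_{f_1+f_2}$, $u_{f_1+f_2}$ are \emph{defined} in the course of this computation: $\nu_1^{(f_i)}=\dim(\ker(f_i)\cap\operatorname{im}(f_i))$, $b_{f_1+f_2}=\dim\bigl((f_1+f_2)(B_1\otimes B_2)\bigr)$, and $u_{f_1+f_2}=\dim\bigl(U\cap(f_1+f_2)(B_1\otimes B_2)\bigr)$ with $U=\ker(f_1)\otimes\operatorname{im}(f_2)+\operatorname{im}(f_1)\otimes\ker(f_2)$. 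The bounds then follow from $0\le u_{f_1+f_2}\le b_{f_1+f_2}\le\dim(B_1\otimes B_2)$ and $\nu_1^{(f_i)}\le\mu_i-\tau_i$.

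Your Jordan-block route yields the sharper identity $\tau_{f_1+f_2}=\sum_{c\ge 1}N_c^{(f_1)}N_c^{(f_2)}$, from which both bounds drop out in one line; this is a real gain in transparency, and it explains \emph{why} the bounds hold rather than extracting them from a case analysis. Your identification $\nu_1^{(f_i)}=N_2^{(f_i)}$ is correct (in each Jordan block of size $j$ the space $\ker\cap\operatorname{im}$ has dimension $1$ if $j\ge 2$ and $0$ if $j=1$).

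The one point where your proposal does not yet prove the theorem as stated is the exact formula: the paper's $b_{f_1+f_2}$ and $u_{f_1+f_2}$ are specific dimensions attached to the chosen complements $B_i$, not free parameters, so it is not enough to know that $b-u=\sum_{c,c'\ge 3,\,c\ne c'}N_c^{(f_1)}N_{c'}^{(f_2)}$ is what the formula \emph{requires}; you must show that the paper's $b$ and $u$ actually satisfy this. That step needs either a direct computation of $\dim(f_1+f_2)(B_1\otimes B_2)$ and of its intersection with $U$ in Jordan coordinates (choosing $B_i$ compatibly with the Jordan basis, which is possible since $\dim B_i=\sum_{c\ge 3}N_c^{(f_i)}$), or essentially the image decomposition the paper carries out. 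You flag this honestly, but as written the exact formula is asserted rather than proved.
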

Here the numbers \(\nu_1^{(f_1)},\nu_1^{(f_2)},b_{f_1+f_2},u_{f_1+f_2}\) are nonnegative integers associated to certain dimensions of \(\mathbb{C}\)--vector spaces, which will be defined Section \ref{sec:tjurinast}. It is important to remark that the numbers \(\nu_1^{(f_1)},\nu_1^{(f_2)},b_{f_1+f_2},u_{f_1+f_2}\) are in general difficult to manage, moreover as we will see in the proof of Theorem \ref{thm:tjurina-st} the numbers \(b_{f_1+f_2},u_{f_1+f_2}\) seems to strongly depend on \(f_1+f_2\) rather than the separated \(f_1,f_2.\) 

In contrast with the formula, the bounds provided by Theorem \ref{thm:tjurina-st} are quite simple and also they are sharp. For that reason, we think that the important part of Theorem \ref{thm:tjurina-st} is precisely the lower and upper bounds; and more specifically the upper bound. That estimates for will allow us to characterize Sebastiani-Thom type singularities with \(\mu-\tau\leq 2\) (Corollary \ref{cor:charqh} and Corollary \ref{cor:smallmuminustau}). Also, from Theorem \ref{thm:tjurina-st} we can deduce that we cannot expect a \textquotedblleft nice\textquotedblright  expression for the Tjurina subspectrum for this family of singularities (see Section \ref{sec:tjurinast}).

The motivation to tackle the problem of finding a formula for the Tjurina number of a Sebastiani-Thom type singularity is to provide new particular cases of the following problem proposed by the author in \cite{quotpat}.
	\begin{prob}\cite[Problem 1]{quotpat}\label{problem}
		Let \((X,0)\subset(\mathbb{C}^N,0)\) be an isolated complete intersection singularity of dimension \(n\) and codimension \(k=N-n\). Is there an optimal \(\frac{b}{a}\in\mathbb{Q}\) with \(b<a\) such that 
		
		\[\mu-\tau<\frac{b}{a}\mu\;\text{?}\]
		Here optimal means that there exist a family of singularities such that \(\mu/\tau\)  tends to \(\frac{a}{a-b}\) when the multiplicity at the origin tends to infinity.
	\end{prob}

Problem \ref{problem} was originated as an extension of a question posed by Dimca and Greuel about the quotient of the Milnor and Tjurina numbers of a plane curve singularity \cite{dim}. In \cite{quotpat}, we showed the following solutions for \(r=1\): in the case of plane curve singularities we have \((a,b)=(1,4)\) and in the case of surface singularities satisfying Durfee's conjecture we have \((a,b)=(1,3).\) Combining those result together with Theorem \ref{thm:tjurina-st}, we are able to a solution to Problem \ref{problem} in the case of a join of surface singularities or plane curves with quasi-homogeneous functions (Proposition \ref{prop:newbounds2}). Also, in the case of the join of a plane curve singularity with a surface singularity (Proposition \ref{prop:newbounds1}) we will provide an upper bound for \(b/a\) (Proposition \ref{prop:newbounds1}). In another vein, we will use a result of Varchenko, in order to show an upper bound for \(\mu/\tau\) of an hypersurface singularity, not necessarily of Sebastiani-Thom type, in terms of the embedding dimension and the minimal spectral value (Proposition \ref{prop:tauspectral}).

The paper is organized as follows: in Section \ref{sec:tjurinast} we will prove the formula for the Tjurina number of a Sebastiani-Thom type singularity and we will show some its consequences. In Section \ref{sec:quotmutau}, we will study the quotient \(\mu/\tau\) for any isolated hypersurface singularity, not necessarily of Sebastiani-Thom type, we will present an upper bound in terms of the minimal spectral value (Proposition \ref{prop:tauspectral}). Finally, we will discuss the new cases of Problem \ref{problem} that can be obtained as a consequence of our formula for the Tjurina number of a Sebastiani-Thom type singularity.	
	
\subsection*{Acknowledgements}

I would like to thank to J.J. Moyano-Fernández, P.D. González-Pérez and J. Viu-Sos for the useful comments and suggestions during the preparation of this work. 

\section{Tjurina number in the Sebastiani-Thom case}\label{sec:tjurinast}
Let \(f\in\mathbb{C}\{x_0,\dots,x_n\}\) be a germ of isolated hypersurface singularity. Let us denote by 

\[M_f:=\frac{\mathbb{C}\{x_0,\dots,x_n\}}{(\frac{\partial f}{\partial x_0},\dots ,\frac{\partial f}{\partial x_n})},\quad T_f:=\frac{\mathbb{C}\{x_0,\dots,x_n\}}{(f,\frac{\partial f}{\partial x_0},\dots ,\frac{\partial f}{\partial x_n})}\]
the Milnor and Tjurina algebras respectively. Recall that, since multiplication by \(f\) in \(M_f\) is a \(\mathbb{C}\)--linear map, we can see the Tjurina algebra as the cokernel of this map, i.e. 

\[0\rightarrow \ke(f)\rightarrow M_f\xrightarrow{\cdot f}M_f\rightarrow T_f\rightarrow 0.\]

 In the case of a Sebastiani-Thom type singularity \(f_1+f_2\) with \(f_1(\mathbf{x})\in\mathbb{C}\{\mathbf{x}\}=\mathbb{C}\{x_0,\dots,x_n\}\) and \(f_2(\mathbf{y})\in\mathbb{C}\{\mathbf{y}\}=\mathbb{C}\{y_0,\dots,y_n\},\) the Milnor algebra \(M_{f_1+f_2}\) decomposes as the tensor product of the Milnor algebras \(M_{f_1}\) and \(M_{f_2}.\) Therefore, we have the following exact sequence

\[0\rightarrow \ke(f_1+f_2)\rightarrow M_{f_1}\otimes M_{f_2}\xrightarrow{f_1\otimes 1_{\tiny M_{f_2}}+1_{M_{f_1}}\otimes f_2} M_{f_1}\otimes M_{f_2}\rightarrow T_{f_1+f_2}\rightarrow 0.\]

In order to simplify notation, for two \(\mathbb{C}\)--vector spaces \(V\subset W\) we will denote \(W\setminus V\) to the complement of \(V\) in \(W,\) i.e \(V\oplus (W\setminus V)=W;\) which means that \(\{0\}= (W\setminus V)\cap V.\) Also, we will denote \(\dim:=\dim_{\mathbb{C}}\) since all vector spaces to be considered are over \(\mathbb{C}.\) 

Let us denote by \(A_i:=M_{f_i}\setminus(\ke(f_i)+\im(f_i))\) and \(B_i:=\im(f_i)\setminus(\im(f_i)\cap\ke(f_i)).\) Then we have the following decomposition of the Milnor algebra \mbox{\(M_{f_i}=\ke(f_i)\oplus B_{i}\oplus A_i.\)} Using the decomposition in direct sum of the Milnor algebras we are able to provide the proof of Theorem \ref{thm:tjurina-st}

%
%
%
\begin{proof}[Proof of Theorem \ref{thm:tjurina-st}]

In order to simplify notation let us denote by \(\mu_i:=\mu_{f_i}\) and \(\tau_{i}:=\tau_{f_i}.\) Also let us denote by 
\[\nu_1^{(f_i)}:=\dim (\ke(f_i)\cap\im(f_i)).\]
 Our aim is to describe \(\im(f_1+f_2).\) To do so observe that 

\begin{equation}\label{eqn:1}
	\begin{split}
		(f_1+f_2)(M_{f_1}\otimes M_{f_2})=&f_1(M_{f_1})\otimes M_{f_2}+M_{f_1}\otimes f_2(M_{f_2})\\=&f_1(\ke(f_1)\oplus B_1\oplus A_1)\otimes M_{f_2}+M_{f_1}\otimes f_2(\ke(f_2)\oplus B_2\oplus A_2)\\
	\end{split}
\end{equation}

where 

\begin{equation*}
	\begin{split}
		f_1(\ke(f_1)\oplus B_1\oplus A_1)\otimes M_{f_2}=(f_1(B_1)\otimes\ke(f_2))\oplus& (f_1(B_1)\otimes B_2)\oplus (f_1(B_1)\otimes A_2)\\
		\oplus&(f_1(A_1)\otimes\ke(f_2))\oplus (f_1(A_1)\otimes B_2)\oplus (f_1(A_1)\otimes A_2)
	\end{split}
\end{equation*}

and

\begin{equation*}
	\begin{split}
	M_{f_1}\otimes f_2(\ke(f_2)\oplus B_2\oplus A_2)=(\ke(f_1)\otimes f_2(B_2))\oplus& (B_1\otimes f_2( B_2))\oplus (A_1\otimes f_2(B_2))\\
		\oplus&(\ke(f_1)\otimes A_2)\oplus (B_1\otimes f_2(A_2))\oplus (A_1\otimes f_2(A_2)).
	\end{split}
\end{equation*}

Before to continue, observe that we have the following equality
\begin{equation}\label{eqn:U}
\begin{array}{ccc}
	U:=&f_1(B_1)\otimes\ke(f_2)\oplus f_1(A_1)\otimes \ke(f_2)+ \ke(f_1)\otimes f_2(B_2)\oplus\ke(f_1)\otimes f_1(A_1)&\\&\shortparallel&\\
	U':=&\ke(f_1)\otimes \im(f_2)+\im(f_1)\otimes \ke(f_2).&
\end{array}
\end{equation}
In order to check Equation \eqref{eqn:U}, we will show that the obvious inclusions \[
\begin{split}
(f_1(B_1)\otimes\ke(f_2))\oplus( f_1(A_1)\otimes \ke(f_2))\subset\im(f_1)\otimes \ke(f_2)&\quad \text{and}\\ (\ke(f_1)\otimes f_2(B_2))\oplus( \ke(f_1)\otimes f_2(A_2))\subset\ke(f_1)\otimes \im(f_2)&
\end{split}\]
are in fact equalities by a dimension argument. Since by definition \((B_1\oplus A_1)\cap \ke(f_1)=\{0\},\) then it is straightforward to check \[((B_1\oplus A_1)\otimes \ke(f_2))\cap\ke(f_1+f_2)=\{0\}.\] 
Thus, we have \(\dim(f_1(B_1\oplus A_1)\otimes \ke(f_2))=(\mu_1-\tau_1)\tau_2\) and in this way we obtain the following equality \[(f_1(B_1)\otimes\ke(f_2))\oplus( f_1(A_1)\otimes \ke(f_2))=\im(f_1)\otimes \ke(f_2).\]
 A similar argument shows that \(\ke(f_1)\otimes f_2(B_2)\oplus \ke(f_1)\otimes f_2(A_2)=\ke(f_1)\otimes \im(f_2).\) Therefore \(U=U'.\) Moreover, we can easily compute its dimension as
 \[\dim U=(\mu_1-\tau_1)\tau_2+(\mu_2-\tau_2)\tau_1-\nu_1^{(f_1)}\nu_1^{(f_2)}.\]
 since \(\dim((\ke(f_1)\cap\im(f_1))\otimes(\ke(f_2)\cap\im(f_2)))=\nu_1^{(f_1)}\nu_1^{(f_2)}.\)

We can now rewrite Equation \eqref{eqn:1} as follows

\begin{equation*}
	\begin{split}
		(f_1+f_2)(M_{f_1}\otimes M_{f_2})=& (f_1+f_2)(A_1\otimes A_2)+(f_1+f_2)(A_1\otimes B_2)\\+& (f_1+f_2)(B_1\otimes A_2)+U+ (f_1+f_2)(B_1\otimes B_2)
	\end{split}
\end{equation*}
At this point, we are going to show that the previous sum has the following decomposition as direct sum:
\begin{equation}\label{eqn:directsum}
	\begin{split}
		(f_1+f_2)(M_{f_1}\otimes M_{f_2})=& (f_1+f_2)(A_1\otimes A_2)\oplus(f_1+f_2)(A_1\otimes B_2)\oplus (f_1+f_2)(B_1\otimes A_2)\\
		\oplus&(U+ (f_1+f_2)(B_1\otimes B_2))
	\end{split}
\end{equation}

We start with \(U\cap (f_1+f_2)(A_1\otimes B_2)=\{0\},\) and similar arguments work for \(U\cap (f_1+f_2)(A_1\otimes A_2)=\{0\}\) and \(U\cap (f_1+f_2)(B_1\otimes A_2)=\{0\}.\) Assume that there exist \(0\neq u\in U\cap (f_1+f_2)(A_1\otimes B_2)\), then there are \(a_1\otimes b_2\in A_1\otimes B_2,\) \(\alpha\in\ke(f_1),\) \(\alpha'\in M_{f_1},\) \(\beta\in\ke(f_2),\) \(\beta'\in M_{f_2}\) such that

\[f_1(a_1)\otimes b_2+a_1\otimes f_2(b_2)=u=\alpha\otimes f_2(\beta')+f_1(\alpha')\otimes \beta.\]

Thus, \(a_1\otimes f_2(b_2)=\alpha\otimes f_2(\beta')+f_1(\alpha')\otimes \beta-f_1(a_1)\otimes b_2\) which implies that 
\[a_1\otimes f_2(b_2)\in ((\ke(f_1)+\im(f_1))\otimes M_{f_2})\cap (A_1\otimes \im(f_2))\] 
but \(((\ke(f_1)+\im(f_1))\otimes M_{f_2})\cap (A_1\otimes \im(f_2))=\{0\}\) and therefore it provides a contradiction.

Finally, we check \((f_1+f_2)(A_1\otimes A_2)\cap (f_1+f_2)(B_1\otimes A_2)=\{0\},\) and similar arguments work for the rest of intersections that remain to check. Assume that there exists \(0\neq u\in (f_1+f_2)(A_1\otimes A_2)\cap (f_1+f_2)(B_1\otimes A_2)\), then there are \(a_1\otimes a_2\in A_1\otimes A_2\) and \(b_1\otimes a'_2\in B_1\otimes A_2\) such that 

\[f_1(a_1)\otimes a_2+a_1\otimes f_2(a_2)=u=f_1(b_1)\otimes a'_2+b_1\otimes f_2(a'_2).\]
Thus, \(a_1\otimes f_2(a_2)= f_1(b_1)\otimes a'_2+b_1\otimes f_2(a'_2)-f_1(a_1)\otimes a_2\in \im(f_1)\otimes M_{f_2}\) which again provides a contradiction since \(A_1\cap \im(f_1)=\{0\}.\)
%
%

To finish, we only need to compute the dimension of each summand as follows

\begin{equation}\label{eqn:dimensions}
	\begin{split}
		\dim(f_1+f_2)(A_1\otimes A_2)=&\dim(A_1\otimes A_2)=\nu_1^{(f_1)}\nu_1^{(f_2)}\\
		\dim(f_1+f_2)(A_1\otimes B_2)=&\dim(A_1\otimes B_2)=\nu_1^{(f_1)}(\mu_2-\tau_2-\nu_1^{(f_2)})\\
			\dim(f_1+f_2)(B_1\otimes A_2)=&\dim(B_1\otimes A_2)=\nu_1^{(f_2)}(\mu_1-\tau_1-\nu_1^{(f_1)})\\
			\dim\; U=&(\mu_1-\tau_1)\tau_2+(\mu_2-\tau_2)\tau_1-\dim(\ke(f_1)\cap\im(f_1)\otimes\ke(f_2)\cap\im(f_2))
	\end{split}
\end{equation}

Let us denote by \(b_{f_1+f_2}:=\dim((f_1+f_2)(B_1\otimes B_2))\) and by \(u_{f_1+f_2}:=\dim(U\cap (f_1+f_2(B_1\otimes B_2)))\). Then combining Equation \eqref{eqn:directsum} and \eqref{eqn:dimensions} we have

\begin{equation*}
	\begin{split}
		\dim\im(f_1+f_2)=& \nu_1^{(f_1)}\nu_1^{(f_2)}+\nu_1^{(f_1)}(\mu_2-\tau_2-\nu_1^{(f_2)})+\nu_1^{(f_2)}(\mu_1-\tau_1-\nu_1^{(f_1)})+\dim U+ b_{f_1+f_2}-u_{f_1+f_2}\\
		=&(\mu_1-\tau_1)\tau_2+(\mu_2-\tau_2)\tau_1+\nu_1^{(f_1)}(\mu_2-\tau_2)+\nu_1^{(f_2)}(\mu_1-\tau_1)-2\nu_1^{(f_2)}\nu_1^{(f_1)}+ b_{f_1+f_2}-u_{f_1+f_2}.
	\end{split}
\end{equation*}

Then,
\begin{equation}\label{eqn:lasteq}
	\begin{split}
		\tau_{f_1+f_2}=&\dim\operatorname{Coker}(f_1+f_2)=\mu_{f_1+f_2}-\dim \im(f_1+f_2)\\
		=&\tau_1\tau_2+(\mu_1-\tau_1)\tau_2+(\mu_2-\tau_2)\tau_1+(\mu_1-\tau_1)(\mu_2-\tau_2)-\dim \im(f_1+f_2)\\
		=&\tau_1\tau_2+(\mu_1-\tau_1)(\mu_2-\tau_2)-\nu_1^{(f_2)}(\mu_1-\tau_1)-\nu_1^{(f_1)}(\mu_2-\tau_2)+2\nu_1^{(f_2)}\nu_1^{(f_1)}-b_{f_1+f_2}+u_{f_1+f_2}.
	\end{split}
\end{equation}

Let us now show the \textquotedblleft in particular \textquotedblright statement. First, observe that \[b_{f_1+f_2} \leq\dim(B_1\otimes B_2)=(\mu_1-\tau_1-\nu_1^{(f_1)})(\mu_2-\tau_2-\nu_1^{(f_2)}).\]

Then, 
\[\tau_{f_1+f_2}\geq\tau_1\tau_2+\nu_1^{(f_2)}\nu_1^{(f_1)}+u_{f_1+f_2}\geq\tau_1\tau_2,\]
where the last inequality follows from the fact that \(\nu_1^{(f_1)}\geq 0\) and \(u_{f_1+f_2}\geq 0.\)

In order to show the upper bound, we recall that \(\nu_1^{(f_i)}\leq(\mu_i-\tau_i)\) and \(u_{f_1+f_2}\leq b_{f_1+f_2}.\) Therefore,

\[\tau_{f_1+f_2}\leq \tau_1\tau_2+(\mu_1-\tau_1)(\mu_2-\tau_2)+\nu_1^{(f_2)}(\nu_1^{(f_1)}-(\mu_1-\tau_1))+\nu_1^{(f_1)}(\nu_1^{(f_2)}-(\mu_2-\tau_2))\]
from which the desire bound follows.
\end{proof}

\begin{rem}
	Our proof provides not only a formula for the Tjurina number for the Sebastiani-Thom type singularities but also provides an explicit expression of the structure of \(\im(f_1+f_2).\) In particular, by means of the good basis considered by Scherk \cite{scherkmanin}, Saito \cite{msaito-brieskorn} and Hertling \cite{hertling1,hertling2} one can describe a basis of \(\im(f_1+f_2)\) which behaves well with respect to the \(V\)--filtration associated to \(J_{f_1+f_2}+(f_1+f_2).\)
		\end{rem} 
	
	\begin{rem}
	A point of interest in this setting is the Tjurina subspectrum. In \cite{tjurinasubs}, Jung et al defined the Tjurina subspectrum \(\{\alpha_i^{Tj}\}_{i\in A}\) of \(f\). This subspectrum is a subset of the spectrum of \(f\) such that \(|A|=\tau\) where the indexes are chosen carefully with respect to certain properties of the Hodge ideal \(V\)--filtration associated to \(J_f+f.\) Recall that if \(\{\alpha_1,\dots,\alpha_{\mu_f}\}\) and \(\{\beta_1,\dots,\beta_{\mu_g}\}\) are the spectral numbers of \(f\) and \(g\) respectively then \[\{\alpha_i+\beta_k|\;1\leq i\leq \mu_f,\,1\leq k\leq \mu_g\}\] are the spectral numbers of \(f+g.\) However, Theorem \ref{thm:tjurina-st} shows that the Tjurina subspectrum does not behave so well with respect to the Sebastiani-Thom property since otherwise \(\tau_{f+g}\) would be the product of \(\tau_f\) and \(\tau_g\). Being obvious one of the implication, we wonder whether the Sebastiani-Thom property on the Tjurina subspectrum is equivalent to the fact that \(\tau_{f+g}=\tau_f\tau_g;\) where by the Sebastiani-Thom property on the Tjurina subspectrum we means that if \(\{\alpha_i^{Tj}\}_{i\in A_f}\) and \(\{\beta_k^{Tj}\}_{i\in A_g}\) are the Tjurina subspectrum of \(f\) and \(g\) respectively then \[\{\alpha_i^{Tj}+\beta^{Tj}_k|\;1\leq i\leq \tau_f,\,1\leq k\leq \tau_g\}\] is the Tjurina spectrum of \(f+g.\)
\end{rem}

A first consequence of Theorem \ref{thm:tjurina-st} is the following characterization of quasi-homogeneous singularities arising as the join of two functions.

\begin{corollary}\label{cor:charqh}
	Let \(F=f+g\in\mathbb{C}\{x_0,\dots,x_n;y_0,\dots,y_m\}\) be a germ of isolated hypersurface singularity defined as the join of \(f(\mathbf{x})\in\mathbb{C}\{\mathbf{x}\}=\mathbb{C}\{x_0,\dots,x_n\}\) and \(g(\mathbf{y})\in\mathbb{C}\{\mathbf{y}\}=\mathbb{C}\{y_0,\dots,y_n\}\). Then, the following are equivalent
	\begin{enumerate}
		\item \(F\) is quasi-homogeneous,
		\item \(f\) and \(g\) are quasi-homogeneous.
		\item \(\tau_{F}=\mu_f\mu_g.\)
	\end{enumerate}
	
\end{corollary}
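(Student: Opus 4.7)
The plan is to pivot everything on K. Saito's classical theorem: an isolated hypersurface singularity $h$ is quasi-homogeneous (in suitable coordinates) if and only if $\mu_h=\tau_h$, equivalently $h\in J_h$. Combined with the Sebastiani--Thom multiplicativity $\mu_{F}=\mu_f\mu_g$, the equivalence $(1)\Leftrightarrow(3)$ is then immediate: $F$ is quasi-homogeneous iff $\tau_F=\mu_F$, and by multiplicativity this is the same as $\tau_F=\mu_f\mu_g$. So the real content is $(2)\Leftrightarrow(3)$, which I would obtain from Theorem \ref{thm:tjurina-st}.

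For $(2)\Rightarrow(3)$, Saito's theorem gives $\mu_f-\tau_f=\mu_g-\tau_g=0$. Since $0\leq\nu_1^{(f)}\leq\mu_f-\tau_f$ and similarly for $g$, both numbers $\nu_1^{(f)}$ and $\nu_1^{(g)}$ vanish, and the chain of inequalities $0\leq u_{f+g}\leq b_{f+g}\leq(\mu_f-\tau_f-\nu_1^{(f)})(\mu_g-\tau_g-\nu_1^{(g)})$ recorded inside the proof of Theorem \ref{thm:tjurina-st} forces $b_{f+g}=u_{f+g}=0$. Substituting these vanishings into the formula of Theorem \ref{thm:tjurina-st} yields $\tau_F=\tau_f\tau_g=\mu_f\mu_g$, proving $(3)$.

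For $(3)\Rightarrow(2)$, I would feed the hypothesis $\tau_F=\mu_f\mu_g$ into the upper bound $\tau_F\leq\tau_f\tau_g+(\mu_f-\tau_f)(\mu_g-\tau_g)$ from Theorem \ref{thm:tjurina-st}. A direct expansion and rearrangement of $\mu_f\mu_g\leq\tau_f\tau_g+(\mu_f-\tau_f)(\mu_g-\tau_g)$ collapses to
\[
\tau_f(\mu_g-\tau_g)+\tau_g(\mu_f-\tau_f)\leq 0.
\]
Both summands are nonnegative and, for genuine isolated singularities, $\tau_f,\tau_g\geq 1$, so each factor $\mu_f-\tau_f$ and $\mu_g-\tau_g$ must vanish. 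Saito's criterion then delivers $(2)$.

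I do not anticipate a substantial technical obstacle: the whole argument is an algebraic consequence of the upper bound in Theorem \ref{thm:tjurina-st} together with Saito's theorem. The only mildly delicate points are (a) checking carefully that the auxiliary quantities $b_{f+g}, u_{f+g}$ collapse to zero under the quasi-homogeneity hypothesis, which follows from the bounds already exhibited in the proof of Theorem \ref{thm:tjurina-st}, and (b) handling the degenerate cases in which $f$ or $g$ is smooth at the origin (so that $\mu=\tau=0$), where all three conditions hold trivially.
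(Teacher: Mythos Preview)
Your proposal is correct and follows essentially the same route as the paper: both arguments use K.~Saito's criterion to identify $(1)$ with $(3)$ via $\mu_F=\mu_f\mu_g$, and both extract the nontrivial link to $(2)$ from the upper bound in Theorem~\ref{thm:tjurina-st} by rewriting $\mu_f\mu_g\leq\tau_f\tau_g+(\mu_f-\tau_f)(\mu_g-\tau_g)$ as $\tau_f(\mu_g-\tau_g)+\tau_g(\mu_f-\tau_f)\leq 0$. The only cosmetic differences are that the paper organizes the cycle as $(1)\Rightarrow(2)\Rightarrow(3)\Leftrightarrow(1)$ and argues $(1)\Rightarrow(2)$ by contradiction, and that for $(2)\Rightarrow(3)$ the paper simply invokes the squeeze $\tau_f\tau_g\leq\tau_F\leq\tau_f\tau_g$ rather than tracking the vanishing of $\nu_1^{(\cdot)},b_{f+g},u_{f+g}$ through the full formula.
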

\begin{proof}
	\((1)\Rightarrow(2)\), we assume \(F\) to be a quasi-homogeneous function. Let us suppose \(f\) being quasi-homogeneous and \(g\) not quasi-homogeneous. By Theorem \ref{thm:tjurina-st} and K. Saito's Theorem \cite{saitohom} we have 
	\[\tau_F=\tau_f\mu_g\lvertneqq\mu_f\mu_g=\mu_F.\]
	
	Now assume \(F\) to be a quasi-homogeneous function and \(f\) and \(g\) not quasi-homogeneous.  By Theorem \ref{thm:tjurina-st}
	
	\[\tau_F\leq\tau_f\tau_g+(\mu_f-\tau_f)(\mu_g-\tau_g)=\mu_f\mu_g+ (\tau_f-\mu_f)\tau_g+(\tau_g-\mu_g)\tau_f\lvertneqq\mu_f\mu_g,\]
	where the last inequality is strict because by Saito's Theorem \cite{saitohom} \((\tau_f-\mu_f)\tau_g+(\tau_g-\mu_g)\tau_f\lvertneqq 0.\) Therefore, in both cases we have a contradiction since by Saito's Theorem \(\mu_F=\tau_F.\)
	
	\((2)\Rightarrow(3)\) is a direct consequence of Theorem \ref{thm:tjurina-st} ans Saito's Theorem \cite{saitohom}.
	
	\((3)\Leftrightarrow(1)\) follows from Saito's Theorem \cite{saitohom}.
	
\end{proof}

It is also interesting the case where one of the singularities has \(e^{BS}(f)=2,\) since in this case \(\nu_1=\mu-\tau.\) A particular example of this situation is a join where one of the functions defines a plane curve singularity. In those cases, we have the following simplified expression of the Tjurina number.

\begin{corollary}\label{cor:maximaltau}
	Let \(f_1(\mathbf{x})\in\mathbb{C}\{\mathbf{x}\}=\mathbb{C}\{x_0,\dots,x_n\}\) and \(f_2(\mathbf{y})\in\mathbb{C}\{\mathbf{y}\}=\mathbb{C}\{y_0,\dots,y_n\}\) be germs of isolated hypersurface singularities in separated variables. Assume that \(e^{(BS)(f_1)}=2\) or \(e^{(BS)(f_2)}=2.\) Then, 
	\[\tau_{f_1+f_2}=\tau_{f_1}\tau_{f_2}+\nu_1^{(f_1)}\nu_1^{(f_2)}.\]
\end{corollary}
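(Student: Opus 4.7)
My plan is to reduce everything to the assumption $e^{(BS)}(f_1)=2$ (the other case is symmetric) and exploit the fact that this forces one of the direct summands in the decomposition $M_{f_1}=\ke(f_1)\oplus B_1\oplus A_1$ to vanish, so that the ``mysterious'' terms $b_{f_1+f_2}$ and $u_{f_1+f_2}$ in Theorem \ref{thm:tjurina-st} drop out and the remaining expression simplifies to $\tau_{f_1}\tau_{f_2}+\nu_1^{(f_1)}\nu_1^{(f_2)}$.

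First I would recall (as alluded to before the statement) that the hypothesis $e^{(BS)}(f_1)=2$ is equivalent to $\nu_1^{(f_1)}=\mu_{f_1}-\tau_{f_1}$. From the definitions of $A_1$ and $B_1$ in Section \ref{sec:tjurinast} one reads off $\dim A_1=\nu_1^{(f_1)}$ (this is the piece of $\im(f_1)$ that lies inside $\ke(f_1)$) and $\dim B_1=(\mu_{f_1}-\tau_{f_1})-\nu_1^{(f_1)}$. Hence the hypothesis forces $B_1=\{0\}$. Since $B_1\otimes B_2=\{0\}$, by the very definitions introduced in the proof of Theorem \ref{thm:tjurina-st} we immediately obtain
\[
b_{f_1+f_2}=\dim\bigl((f_1+f_2)(B_1\otimes B_2)\bigr)=0,\qquad u_{f_1+f_2}=\dim\bigl(U\cap (f_1+f_2)(B_1\otimes B_2)\bigr)=0.
\]

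Next I would substitute $\nu_1^{(f_1)}=\mu_{f_1}-\tau_{f_1}$ together with $b_{f_1+f_2}=u_{f_1+f_2}=0$ into the formula of Theorem \ref{thm:tjurina-st}. The third summand $-\nu_1^{(f_2)}(\mu_{f_1}-\tau_{f_1}-\nu_1^{(f_1)})$ vanishes, while the fourth summand becomes $-(\mu_{f_1}-\tau_{f_1})(\mu_{f_2}-\tau_{f_2}-\nu_1^{(f_2)})$, which combines with the term $(\mu_{f_1}-\tau_{f_1})(\mu_{f_2}-\tau_{f_2})$ to leave exactly $(\mu_{f_1}-\tau_{f_1})\nu_1^{(f_2)}=\nu_1^{(f_1)}\nu_1^{(f_2)}$. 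Collecting terms yields the claimed identity.

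The only conceptual step is the first one, namely extracting $\nu_1^{(f_1)}=\mu_{f_1}-\tau_{f_1}$ from $e^{(BS)}(f_1)=2$; once this is available the proof is a short and entirely algebraic substitution into the already-established formula, so no genuine obstacle remains.
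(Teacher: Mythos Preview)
Your proposal is correct and follows essentially the same route as the paper: from $e^{BS}(f_1)=2$ deduce $\nu_1^{(f_1)}=\mu_{f_1}-\tau_{f_1}$, hence $B_1=\{0\}$, hence $b_{f_1+f_2}=u_{f_1+f_2}=0$, and then substitute into Theorem~\ref{thm:tjurina-st}. One small slip worth correcting: your parenthetical describing $A_1$ as ``the piece of $\im(f_1)$ that lies inside $\ke(f_1)$'' misidentifies $A_1$, which is a complement of $\ke(f_1)+\im(f_1)$ in $M_{f_1}$ and merely has the same dimension $\nu_1^{(f_1)}$ as $\ke(f_1)\cap\im(f_1)$; this does not affect the argument, since only $\dim B_1=0$ is actually used.
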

\begin{proof}
	 Assume that \(e^{(BS)(f_1)}=2,\) and the case \(e^{(BS)(f_2)}=2\) follows from similar arguments. Observe that if \(e^{(BS)(f_1)}=2\) then \(\nu_1^{(f_1)}=\mu_{f_1}-\tau_{f_1},\) which in particular implies \(B_1=\im(f_1)\setminus(\im(f_1)\cap\ke(f_1))=\{0\}.\) Thus, the claim follows from Theorem \ref{thm:tjurina-st}.
\end{proof}

\begin{ex}
	Let us consider the isolated hypersurface singularity in \(\mathbb{C}\{x_1,x_2,x_3,x_4,x_5,x_6\}\) defined by 
	\[f(x_1,x_2,x_3,x_4,x_5,x_6)=x_2^4-x_1^5+x_1^3x_2^2+x_4^4-x_3^5+x_3^3x_4^2+x_6^4-x_5^5+x_5^3x_6^2,\]
	which is the join of three irreducible plane curve singularities of the form \(G(x,y)=y^4-x^5+x^3y^2.\)
	
	We can compute the Milnor and Tjurina numbers of \(G,\) \(\mu(G)=12\) and \(\tau(G)=11.\) By Corollary \ref{cor:maximaltau}, if we consider the hypersurface in \(\mathbb{C}\{x_1,x_2,x_3,x_4\}\) defined by \(H=G(x_1,x_2)+G(x_3,x_4),\) we have that \[\tau(G(x_1,x_2)+G(x_3,x_4))=11^2+1=122.\] With the help of SINGULAR \cite{singular}, we can compute \(\nu_1^{(H)}=21.\) Therefore, by Corollary \ref{cor:maximaltau} we can compute 
	\[\tau_f=\tau_H\tau_G+\nu_1^{(H)}\nu_1^{(G)}=122\cdot11+21\cdot1=1363.\]
\end{ex}
To finish, we can provide a characterization of Sebastiani-Thom type singularities with \(\mu-\tau\leq 2.\)
\begin{corollary}\label{cor:smallmuminustau}
		Let \(f_1(\mathbf{x})\in\mathbb{C}\{\mathbf{x}\}=\mathbb{C}\{x_0,\dots,x_n\}\) and \(f_2(\mathbf{y})\in\mathbb{C}\{\mathbf{y}\}=\mathbb{C}\{y_0,\dots,y_n\}\) be germs of isolated hypersurface singularities in separated variables. Then,
		\begin{enumerate}
			\item \(\tau_{f_1+f_2}=\mu_{f_1+f_2}-1\) if and only if \(\mu_{f_1}=\tau_{f_1}=1\) and \(\mu_{f_2}-1=\tau_{f_2}\) or  \(\mu_{f_2}=\tau_{f_2}=1\) and \(\mu_{f_1}-1=\tau_{f_1}.\)
				\item \(\tau_{f_1+f_2}=\mu_{f_1+f_2}-2\) if and only if  \(f_1,f_2\) are in one of the following cases:
				\begin{enumerate}
					\item \(\mu_{f_1}=\tau_{f_1}=1\)  and \(\mu_{f_2}-2=\tau_{f_2}\). 
					\item \(\mu_{f_2}=\tau_{f_2}=1\) and \(\mu_{f_1}-2=\tau_{f_1}.\)
					\item \(\mu_{f_1}=\mu_{f_2}=2\) and \(\tau_{f_1}=\tau_{f_2}=1.\)
					\item \(\mu_{f_1}=\tau_{f_1}=2\) and \(\mu_{f_2}-1=\tau_{f_2}.\)
					\item \(\mu_{f_2}=\tau_{f_2}=2\) and \(\mu_{f_1}-1=\tau_{f_1}.\)
				\end{enumerate}
				  
		\end{enumerate}
\end{corollary}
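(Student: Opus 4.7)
The plan is to convert the sandwich bounds of Theorem~\ref{thm:tjurina-st} into bounds on $\mu_{f_1+f_2}-\tau_{f_1+f_2}$, enumerate the few integer solutions for which this difference equals $1$ or $2$, and verify the reverse direction by exploiting the drastic simplification of the formula when one of the $f_i$ is quasi-homogeneous.

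Set $\delta_i:=\mu_{f_i}-\tau_{f_i}$. Using $\mu_{f_1+f_2}=\mu_{f_1}\mu_{f_2}$ together with the identity $\mu_{f_1}\mu_{f_2}-\tau_{f_1}\tau_{f_2}=\tau_{f_1}\delta_2+\tau_{f_2}\delta_1+\delta_1\delta_2$, the inequalities of Theorem~\ref{thm:tjurina-st} rewrite as
\[
\tau_{f_1}\delta_2+\tau_{f_2}\delta_1\;\le\;\mu_{f_1+f_2}-\tau_{f_1+f_2}\;\le\;\tau_{f_1}\delta_2+\tau_{f_2}\delta_1+\delta_1\delta_2.
\]
Since the $f_i$ have isolated critical points at the origin, $\tau_{f_i}\ge 1$. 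Hence for $\mu_{f_1+f_2}-\tau_{f_1+f_2}=1$ one must have $\tau_{f_1}\delta_2+\tau_{f_2}\delta_1=1$ (the value $0$ forces $\delta_1=\delta_2=0$, incompatible with $\delta_1\delta_2\ge 1$), which after swapping $f_1\leftrightarrow f_2$ if necessary amounts to $\delta_1=0$ and $\tau_{f_1}=\delta_2=1$. This is exactly item~(1). The analogous integer analysis for $\mu_{f_1+f_2}-\tau_{f_1+f_2}=2$ leaves only $\tau_{f_1}\delta_2+\tau_{f_2}\delta_1=2$; splitting according to whether exactly one or both of $\delta_1,\delta_2$ are nonzero recovers the five alternatives (a)--(e): the four cases with one $\delta_i=0$ give (a), (b), (d), (e), while $\delta_1,\delta_2\ge 1$ forces $\tau_{f_1}=\tau_{f_2}=\delta_1=\delta_2=1$, which is (c).

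For the converse, the key observation is that $\delta_i=0$ is equivalent, by Saito's criterion, to $f_i\in J_{f_i}$, which means the multiplication map $f_i\colon M_{f_i}\to M_{f_i}$ vanishes identically, i.e.\ $\im(f_i)=0$ in $M_{f_i}$. Consequently $B_i=0$, $\nu_1^{(f_i)}=0$, and both $b_{f_1+f_2}$ and $u_{f_1+f_2}$ vanish since $B_1\otimes B_2=0$. The formula of Theorem~\ref{thm:tjurina-st} then degenerates to $\tau_{f_1+f_2}=\tau_{f_1}\tau_{f_2}$, from which $\mu_{f_1+f_2}-\tau_{f_1+f_2}=\mu_{f_1}\mu_{f_2}-\tau_{f_1}\tau_{f_2}$ equals $1$ in case~(1) and $2$ in each of (a), (b), (d), (e) by direct substitution. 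Case~(c) is vacuous: by Arnol'd's classification, any isolated hypersurface singularity with $\mu=2$ is contact equivalent to an $A_2$-type singularity, hence quasi-homogeneous and so has $\tau=2$; there is no germ with $\mu_{f_i}=2$ and $\tau_{f_i}=1$.

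The only genuine subtlety is case (c): handling it via the formula alone would require pinning down $\nu_1^{(f_i)}$, $b_{f_1+f_2}$, and $u_{f_1+f_2}$ in a regime where Theorem~\ref{thm:tjurina-st} admits no obvious simplification (the sandwich only guarantees $\mu-\tau\in\{2,3\}$ there). The vacuousness argument via $A_2$ circumvents this completely, reducing the rest of the proof to a transparent enumeration plus the one-line quasi-homogeneous simplification.
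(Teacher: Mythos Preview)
Your proof is correct and follows essentially the paper's route: translate the bounds of Theorem~\ref{thm:tjurina-st} into a sandwich on $\mu_{f_1+f_2}-\tau_{f_1+f_2}$, enumerate the few integer solutions, and handle the converse via the simplification $\tau_{f_1+f_2}=\tau_{f_1}\tau_{f_2}$ when one factor is quasi-homogeneous (the paper merely calls this direction ``obviously trivial''). Your extra observation that case~(c) is vacuous---since any isolated germ with $\mu=2$ is right-equivalent to $A_2$ and hence has $\tau=2$---is a valid sharpening that the paper does not make; it shows alternative~(c) is in fact empty and neatly sidesteps the only case where the formula of Theorem~\ref{thm:tjurina-st} would not collapse.
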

\begin{proof}
	Let us start with (1). By Theorem \ref{thm:tjurina-st}, we have the following inequality
	\[\tau_{f_1}\tau_{f_2}+(\mu_{f_1}-\tau_{f_2})(\mu_{f_2}-\tau_{f_2})\geq \mu_{f_1+f_2}-1=\mu_{f_1}\mu_{f_2}-1.\]
	Therefore, we have 
	\[0\geq \tau_{f_1}(\tau_{f_2}-\mu_{f_2})+\tau_{f_2}(\tau_{f_1}-\mu_{f_1})\geq -1.\]
	Since \(\tau_{f_i}(\tau_{f_i}-\mu_{f_i})\) are integers numbers, then the inequality is satisfied if at least one of the \(f_i\) is a quasi-homogeneous singularity. Observe that by Corollary \ref{cor:charqh} both cannot be quasi-homogeneous since we are assuming \(\tau_{f_1+f_2}\neq \mu_{f_1+f_2}.\) Also, at least one of the \(f_i\) must be quasi-homogeneous since otherwise \(\tau_{f_1}(\tau_{f_2}-\mu_{f_2})+\tau_{f_2}(\tau_{f_1}-\mu_{f_1})\leq -2.\) Let us assume \(f_1\) is quasi-homogeneous, the case \(f_2\) will follow \textit{mutatis mutandis}. In that case, since \(\mu_{f_2}\neq\tau_{f_2}\) we have 
	\[\tau_{f_1}(\tau_{f_2}-\mu_{f_2})=-1\]
	which implies \(\tau_{f_1}=1=\tau_{f_2}-\mu_{f_2}.\) The converse implication is obviously trivial.
	
	Let us move to the case (2). As before, by Theorem \ref{thm:tjurina-st}, we have the following inequality
	
	\[0\geq \tau_{f_1}(\tau_{f_2}-\mu_{f_2})+\tau_{f_2}(\tau_{f_1}-\mu_{f_1})\geq -2.\]
	As in the proof of part (1) we have the following subcases: \(\tau_{f_1}(\tau_{f_2}-\mu_{f_2})=-2\) and \(\tau_{f_1}=\mu_{f_1},\) \(\tau_{f_2}(\tau_{f_1}-\mu_{f_1})=-2\) and \(\tau_{f_2}=\mu_{f_2},\) \(\tau_{f_2}(\tau_{f_1}-\mu_{f_1})=\tau_{f_1}(\tau_{f_2}-\mu_{f_2})=-1\). The claim now easily follows from the casuistic of each subcase. 
\end{proof}
%
%
%
%
%

\section{The quotient of Milnor and Tjurina numbers}\label{sec:quotmutau}

In 2017, Liu \cite{liu} showed that for any isolated hypersurface singularity defined by \(f:\mathbb{C}^{n+1}\rightarrow\mathbb{C}\) one has \(\mu/\tau\leq n+1\). However, following his proof we are going to show that we can actually be more precise about this upper bound. Recall that the Briançon-Skoda exponent of \(f\) is defined by 

\[e^{BS}(f):=\min\{k\in\mathbb{N}\;|\;f^k\in (\partial f/\partial x_0,\dots,\partial f/\partial x_n)\}.\]

According to the Brian\c{c}on-Skoda Theorem \cite{brianon}, we know that \(e^{BS}(f)\leq n+1.\) However, one can slightly improve that bound by using the spectrum of \(f.\) Recall that the spectrum is a discrete invariant formed by $\mu$ rational \emph{spectral numbers} (see \cite[\nopp II.8.1]{Kul98})
\[
\alpha_1,\dots,\alpha_\mu\in\mathbb{Q}\cap(0,n+1).
\]
They are certain logarithms of the eigenvalues of the monodromy on the middle cohomology of the Milnor fibre which correspond to the equivariant Hodge numbers of Steenbrink's mixed Hodge structure. From this set of invariants Varchenko \cite[Theorem 1.3]{Var81} proved the following result.

\begin{theorem}\label{thm:varchenkoexponent}
	Let \(f\in\mathbb{C}\{x_0,\dots,x_n\}\) be a germ of holmorphic function defining an isolated hypersurface singularity. Let \(\alpha_{min}\) be the minimal spectral number, then 
	\[e^{BS}(f)\leq \lfloor n+1-2\alpha_{min}\rfloor+1\]
\end{theorem}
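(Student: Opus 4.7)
The plan is to argue within the Brieskorn lattice formalism, where both $e^{BS}(f)$ and the spectrum $\alpha_1\le\cdots\le\alpha_\mu$ admit a transparent description in terms of the $V$--filtration of the Gauss--Manin system of $f$. Denote by $H_0''$ the Brieskorn lattice and by $\omega:=dx_0\wedge\cdots\wedge dx_n$ the standard generator; under the canonical identification $H_0''/\partial_t^{-1}H_0''\simeq M_f$ sending $[\omega]$ to the class of $1$, the condition $f^k\in J_f$ becomes $[f^k\omega]=0$.

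First, I would translate the problem into a statement about the $V$--order. The spectral numbers $\alpha_1\le\cdots\le\alpha_\mu$ are precisely the indices in $(0,n+1)$ where the graded pieces of the induced $V$--filtration on $M_f$ are non-trivial, counted with multiplicity; in particular the class $[\omega]$ sits in $V^{\alpha_{min}}M_f$. Since multiplication by $f$ on $M_f$ corresponds to the action of $t$ on the Gauss--Manin side, and $t$ satisfies $t\cdot V^{\alpha}\subseteq V^{\alpha+1}$, we deduce $[f^k\omega]\in V^{\alpha_{min}+k}M_f$ by an immediate induction.

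Next, I would invoke the Steenbrink--Varchenko symmetry $\alpha_i+\alpha_{\mu+1-i}=n+1$, which implies that the largest spectral number equals $\alpha_{max}=n+1-\alpha_{min}$, and consequently $V^{\beta}M_f=0$ for every $\beta>\alpha_{max}$. Hence $[f^k\omega]=0$ as soon as $\alpha_{min}+k>n+1-\alpha_{min}$, that is $k>n+1-2\alpha_{min}$. Since $k$ is a positive integer, the smallest such $k$ is $\lfloor n+1-2\alpha_{min}\rfloor+1$, which yields the announced upper bound on $e^{BS}(f)$.

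The main obstacle is making precise the two compatibilities underlying the argument: the identification of the spectral numbers with the jumps of the $V$--filtration on $M_f$, and the fact that multiplication by $f$ on the Brieskorn lattice is exactly the shift-by-one operator on the $V$--filtration. Both are part of Saito's analysis of the Gauss--Manin system and of the mixed Hodge structure on the Milnor fibre, and are the precise tools used in Varchenko's original proof; granting them, the argument reduces to an elementary order-of-filtration bookkeeping. As a sanity check, in the quasi-homogeneous case $\alpha_{min}$ coincides with the smallest weight of $f$, and the bound recovers the classical sharp estimate for $e^{BS}(f)$.
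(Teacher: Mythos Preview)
The paper does not actually prove this theorem; it is stated as a citation of Varchenko \cite[Theorem~1.3]{Var81} and used as a black box in the proof of Proposition~\ref{prop:tauspectral}. There is therefore no ``paper's own proof'' to compare against.

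That said, your sketch is correct and is essentially the standard argument (which is what Varchenko does, in the language of asymptotic Hodge theory). The two identifications you flag as the main obstacle are indeed the whole content: that $t$ acts on the Brieskorn lattice $H_0''$ as multiplication by $f$ (so that $[f^k\omega]=t^k[\omega]$), and that the spectral numbers are the jumps of the induced $V$--filtration on $H_0''/\partial_t^{-1}H_0''\simeq M_f$, with $[\omega]$ sitting at level $\alpha_{min}$ and the filtration exhausting at $\alpha_{max}=n+1-\alpha_{min}$. Once these are granted, the inequality is, as you say, pure bookkeeping: $t^k V^{\alpha_{min}}\subset V^{\alpha_{min}+k}$, and $V^{>n+1-\alpha_{min}}M_f=0$ forces $[f^k\omega]=0$ for $k>n+1-2\alpha_{min}$. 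The floor computation at the end is also fine in both the integer and non-integer cases.

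One small point worth making explicit for a self-contained write-up: the passage from $[f^k\omega]=0$ in $H_0''/\partial_t^{-1}H_0''$ back to $f^k\in J_f$ uses that $\partial_t^{-1}H_0''$ corresponds exactly to $df\wedge\Omega^n$ under the identification with $\Omega^{n+1}/df\wedge d\Omega^{n-1}$; you implicitly invoke this but do not state it.
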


\begin{rem}
	Our definition of spectral numbers follows K. Saito and M. Saito's definition \cite{saito-distribution,Sai83} in contrast with Steenbrink and Varchenko's definition \cite{Ste77,Var81}. This means that \(\alpha'\) is a spectral number with Steenbrink and Varchenko's definition if and only if \(\alpha'+1\) is a spectral number with Saito's definition.
\end{rem}

Therefore, Varchenko's Theorem \ref{thm:varchenkoexponent} allows to slightly improve the general upper bound provided by Liu in \cite{liu}.

\begin{proposition}\label{prop:tauspectral}
	Let \(f\) be a holomorphic function in \(\mathbb{C}\{x_0,\dots,x_n\}\). Let \(\alpha_{min}\) be the minimal spectral number of \(f\). Then, 
	\[\frac{\mu}{\tau}<e^{BS}(f).\]
	In particular, \(\mu/\tau<\lfloor n+1-2\alpha_{min}\rfloor+1.\)
\end{proposition}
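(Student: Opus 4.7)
The plan is to follow Liu's filtration approach with $e:=e^{BS}(f)$ as the controlling exponent in place of $n+1$, and then plug in Theorem~\ref{thm:varchenkoexponent} for the ``in particular'' statement. The starting point is that, by the very definition of $e^{BS}(f)$, the element $f^e$ lies in $J_f$, so multiplication by $f$ is a nilpotent $\mathbb{C}$-linear endomorphism of the Milnor algebra $M_f$ of nilpotency index at most $e$. This allows one to control $\mu=\dim M_f$ through a filtration whose graded pieces are governed by $\tau$.

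The core of the argument is to analyse the descending $f$-adic filtration
\[
M_f \supseteq f M_f \supseteq f^2 M_f \supseteq \cdots \supseteq f^{e-1} M_f \supseteq f^e M_f = 0.
\]
Multiplication by $f$ respects this filtration and induces surjective $\mathbb{C}$-linear maps on successive quotients $f^k M_f / f^{k+1} M_f \twoheadrightarrow f^{k+1} M_f / f^{k+2} M_f$. Therefore the dimensions of the graded pieces form a non-increasing sequence, and since the top piece is $T_f$ of dimension $\tau$, each piece has dimension at most $\tau$. Summing over $k=0,\dots,e-1$ yields $\mu \leq e\tau$, that is $\mu/\tau \leq e^{BS}(f)$, and plugging in Theorem~\ref{thm:varchenkoexponent} gives the stated spectral bound $\mu/\tau \leq \lfloor n+1-2\alpha_{min}\rfloor +1$.

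The main obstacle is upgrading this $\leq$ to the strict inequality $<$ appearing in the proposition. If equality $\mu=e\tau$ held then every graded piece would have dimension exactly $\tau$ and every induced multiplication map between consecutive pieces would be a bijection; combined with the four-term exact sequence
\[
0 \to \ke(\cdot f) \to M_f \xrightarrow{\ \cdot f\ } M_f \to T_f \to 0,
\]
which yields $\dim \ke(\cdot f)=\tau$, this would force $\ke(\cdot f)=f^{e-1}M_f$. Ruling out this rigidity outside of the trivial case $e=1$ (where the inequality degenerates to the quasi-homogeneous equality $\mu=\tau$ via Saito's theorem) is the one step of the proof not entirely immediate from the filtration above; the $\leq$-version of the bound, on the other hand, drops out cleanly from the argument, and Varchenko's theorem delivers the refined bound in terms of $\alpha_{min}$ without extra work.
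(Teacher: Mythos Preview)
Your approach is essentially the paper's: the same $f$--adic filtration
\[
M_f\supseteq fM_f\supseteq\cdots\supseteq f^{e-1}M_f\supseteq f^eM_f=0,
\]
bound each graded piece by $\tau$, sum, and then apply Varchenko's Theorem~\ref{thm:varchenkoexponent}. The only cosmetic difference is how the bound on the graded pieces is obtained: the paper uses the short exact sequence
\[
0\to \ke(f)\cap(f^i)\to (f^i)\xrightarrow{\ \cdot f\ }(f^i)\to (f^i)/(f^{i+1})\to 0
\]
to get $\dim\big((f^i)/(f^{i+1})\big)=\dim\big(\ke(f)\cap(f^i)\big)\leq \tau$, whereas you observe that multiplication by $f$ surjects each quotient onto the next, so the dimensions are non-increasing with top term $\tau$. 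Both routes land at $\mu\leq e^{BS}(f)\,\tau$.

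On the strictness issue you single out: you are more careful here than the paper itself. The paper's proof, exactly like yours, only establishes $\mu\leq e^{BS}(f)\,\tau$; it does not contain any additional argument upgrading this to the strict inequality written in the statement. And as you implicitly note, strictness genuinely fails when $e^{BS}(f)=1$: for a quasi-homogeneous $f$ one has $f\in J_f$, hence $e^{BS}(f)=1$, while $\mu=\tau$ by Saito's theorem, so $\mu/\tau=1=e^{BS}(f)$. Thus the $<$ in the proposition should be read as $\leq$, and your identification of this as an unresolved step is accurate rather than a gap in your reasoning.
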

\begin{proof}	
	If we denote by \((f^{i})\) the ideal of \(M_f\) generated by \((f^{i}),\) those ideals define a decreasing filtration
	
	\[(0)=(f^{e^{BS}(f)})\subset\cdots\subset(f^2)\subset(f)\subset M_f.\]
	Then, one can consider the following long exact sequence: 
	\[0\rightarrow\ke(f)\cap(f^{i})\rightarrow(f^{i})\xrightarrow{f}(f^i)\rightarrow(f^i)/(f^{i+1})\rightarrow 0\]
	
	where the middle map is the multiplication by \(f\). Then,
	\[\operatorname{dim}_\mathbb{C}\Big\{\frac{(f^i)}{(f^{i+1})}\Big\}=\dim_{\mathbb{C}} \ke(f)\cap(f^{i})\leq \tau.\]
	
	Therefore, 
	
	\[\mu=\operatorname{dim}_\mathbb{C}M_f=\operatorname{dim}_\mathbb{C} T_f+\sum_{i=1}^{e^{BS}(f)-1}\operatorname{dim}_\mathbb{C}\Big\{\frac{(f^i)M_f}{(f^{i+1})M_f}\Big\}\leq e^{BS}(f)\tau.\]
	Applying Varchenko's Theorem \ref{thm:varchenkoexponent} we obtain \(\mu/\tau<\lfloor n+1-2\alpha_{min}\rfloor+1.\)
\end{proof}

\begin{rem}
	Since \(\alpha_{min}>0\) then \(n+1-2\alpha_{min}<n+1\) from which we have \(\lfloor n+1-2\alpha_{min}\rfloor\leq n.\) Therefore, in the worst case we have Liu's result. In the case where \(\alpha_{min}>1/2,\) as for example rational singularities, then \(\lfloor n+1-2\alpha_{min}\rfloor\leq n-1\) and we obtain a strictly better upper bound than the one coming from Brian\c{c}on-Skoda Theorem. 
\end{rem}

After Proposition \ref{prop:tauspectral}, one can then estimate the \(b/a\) of Problem \ref{problem}. Unfortunately, it is easy to check that the bound provided by Proposition \ref{prop:tauspectral} is not sharp. In \cite{quotpat}, we showed that for any plane curve singularity \(\mu/\tau<4/3\) and moreover it is asymptotically sharp. This result provided a full answer to a question posed by Dimca and Greuel in \cite{dim}. The techniques used to prove that bound were based on the theory of surface singularities. More concretely, we showed the relation of Problem \ref{problem} with the long standing Durfee's conjecture \cite{durfee} and its generalization \cite{saito-distribution}, which claims that if \(f\in\mathbb{C}\{x_0,\dots,x_n\}\) defines an isolated hypersurface singularity then \((n+1)!p_g<\mu,\) where \(p_g\) is the geometric genus. With the help of Durfee's conjecture we were also able to show an asymptotically sharp upper bound in the case of surface singularities in \(\mathbb{C}^{3}.\) Those results are collected in the following
\begin{proposition}\cite[Thm. 6 and Prop. 3]{quotpat}\label{prop:oldbounds}
	\begin{enumerate}
		\item If \(f(x,y)\) is a plane curve then \(\mu/\tau<4/3.\)
		\item If \(f(x,y,z)\) is a surface singularity satisfying Durfee's conjecture then \(\mu/\tau<3/2.\)
	\end{enumerate}
\end{proposition}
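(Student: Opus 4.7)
\emph{Plan.} My strategy is to prove (2) directly by combining Durfee's conjecture with a Hodge-theoretic bound on $\mu-\tau$ in terms of the geometric genus $p_g$, and then to reduce (1) to the surface case via the suspension construction, which is compatible with Theorem \ref{thm:tjurina-st}.

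\emph{Proof of (2).} For a surface singularity $f(x,y,z)$, multiplication by $f$ on the Milnor algebra gives $\mu-\tau=\dim\im(f\cdot:M_f\to M_f)$. By Brian\c con--Skoda one has $e^{BS}(f)\le 3$, so $M_f\supset (f)\supset (f^2)\supset 0$ is a finite filtration. Identifying its graded pieces with strata of the Steenbrink spectrum through K.~Saito's description of the Brieskorn lattice, I would establish the Hodge-theoretic bound
\[
\mu-\tau \le 2\,p_g.
\]
Combined with Durfee's assumption $6p_g\le \mu$, this yields $\mu-\tau\le \mu/3$, hence $\tau\ge 2\mu/3$ and $\mu/\tau\le 3/2$. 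Strict inequality follows from the observation that equality in Durfee's conjecture cannot be attained for an isolated hypersurface singularity (there is always slack coming from the positive-codimension strata of the spectrum).

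\emph{Reduction of (1) to a surface singularity.} Given a plane curve $f(x,y)$, form the suspension $F(x,y,z):=f(x,y)+z^2$. For $g(z)=z^2$ one has $\mu_g=\tau_g=1$, so $\nu_1^{(g)}=0$; moreover the subspaces $A_g,B_g$ from the proof of Theorem \ref{thm:tjurina-st} both vanish, since $M_g\cong\mathbb{C}$ and $g$ acts by zero on $M_g$, whence $b_F=u_F=0$. Theorem \ref{thm:tjurina-st} then forces $\tau_F=\tau_f$, while $\mu_F=\mu_f$ by Sebastiani--Thom. Thus the ratio $\mu/\tau$ is preserved under suspension, and $F$ defines a surface singularity for which Durfee's conjecture is known (for a suspension of a plane curve $p_g$ is computable from the $\delta$-invariant of $f$). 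Applying part (2) gives already $\mu_f/\tau_f\le 3/2$. To sharpen to $4/3$ I would exploit the $\mathbb{Z}/2$-action $z\mapsto -z$ on $F$: it splits the Brieskorn lattice into invariant and anti-invariant parts, and the Hodge-theoretic count of $\im(f)$ used in (2) improves in the equivariant setting to $\mu-\tau\le 3p_g/2$. Together with $6p_g\le\mu$ this gives $\mu-\tau\le \mu/4$, hence $\mu/\tau\le 4/3$, with strict inequality by the same slack argument as in (2).

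\emph{Main obstacle.} The crux of both parts is the Hodge-theoretic inequality $\mu-\tau\le 2p_g$ in the surface case, and its equivariant refinement $\mu-\tau\le 3p_g/2$ for suspensions of plane curves. Both require matching the image of multiplication by $f$ on $M_f$ against explicit strata of Steenbrink's spectrum and bounding the relevant strata against $p_g$; the suspension refinement further requires isolating the $\mathbb{Z}/2$-eigenparts of the spectrum, which essentially halves the critical stratum count. The remaining ingredients (Theorem \ref{thm:tjurina-st} applied to $z^2$, Sebastiani--Thom, and the validity of Durfee's conjecture for suspensions) are either proved in this paper or standard.
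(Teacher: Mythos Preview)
First, note that the present paper does not prove this proposition at all: it is quoted verbatim from \cite{quotpat}, so there is no argument here to compare your sketch against line by line. That said, the paper does indicate that the method in \cite{quotpat} goes through the theory of surface singularities and Durfee's conjecture, so your overall architecture for (2) is on the right track. The inequality $\mu-\tau\le 2p_g$ for an isolated surface singularity is indeed the key input; combined with $6p_g<\mu$ it yields $\mu/\tau<3/2$ exactly as you say. You should, however, cite this inequality rather than promise to ``establish'' it: it is a known consequence of M.~Saito's description of the $V$--filtration on the Brieskorn lattice, not something one rederives in passing.

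Your reduction of (1) to a surface via the suspension $F=f+z^2$ is fine and the computation $\mu_F=\mu_f$, $\tau_F=\tau_f$ is correct. The genuine gap is the proposed ``equivariant refinement'' $\mu-\tau\le \tfrac{3}{2}p_g$: this inequality is \emph{false}. Take the very family discussed after Proposition~\ref{prop:newbounds2}, namely $f=y^{n}-x^{n+1}+g$ with $g$ chosen so that $\tau_f=\tau_{\min}\sim \tfrac{3}{4}n^{2}$. Then $\mu_f-\tau_f\sim \tfrac{1}{4}n^{2}$. For the suspension $F=f+z^{2}$ the spectral numbers are $\tfrac{i}{n+1}+\tfrac{j}{n}+\tfrac{1}{2}$, so $p_g(F)$ counts pairs $(i,j)$ with $\tfrac{i}{n+1}+\tfrac{j}{n}\le\tfrac{1}{2}$, which is $\sim\tfrac{1}{8}n^{2}$. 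Hence $\tfrac{3}{2}p_g(F)\sim\tfrac{3}{16}n^{2}<\tfrac{1}{4}n^{2}\sim\mu_f-\tau_f$, and your refinement fails for all large $n$. The heuristic that the $\mathbb{Z}/2$--action ``halves the critical stratum count'' is therefore not salvageable.

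What this computation actually shows is that for suspensions of plane curves one has the \emph{sharper Durfee-type} estimate $p_g(F)\lesssim \tfrac{1}{8}\mu$ rather than $\tfrac{1}{6}\mu$. Feeding this into the \emph{unchanged} inequality $\mu-\tau\le 2p_g$ gives $\mu-\tau<\tfrac{1}{4}\mu$ and hence $\mu/\tau<\tfrac{4}{3}$. In other words, the improvement from $3/2$ to $4/3$ comes from strengthening the Durfee side for double suspensions, not from strengthening the Hodge bound on $\mu-\tau$; your proposal had the refinement on the wrong factor.
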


Therefore, the combination of Theorem \ref{thm:tjurina-st} and Proposition \ref{prop:oldbounds} allow us to generalize our previous results to the following Sebastiani-Thom type singularities.
\begin{proposition}\label{prop:newbounds2}
	Let \(g(z_0,\dots,z_n)\) be a quasi-homogeneous function. Then,
	\begin{enumerate}
		\item If \(f(x,y)\) is an isolated plane curve singularity, then 
		\[\frac{\mu_{f+g}}{\tau_{f+g}}<\frac{4}{3}.\]
	
		\item If \(f(x_1,x_2,x_3)\) is an isolated surface singularity in \(\mathbb{C}^3\) satisfying Durfee's conjecture, then
		\[\frac{\mu_{f+g}}{\tau_{f+g}}<\frac{3}{2}.\]
		
	\end{enumerate}
 
\end{proposition}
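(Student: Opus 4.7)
The plan is to observe that the assumption that $g$ is quasi-homogeneous collapses both the lower and upper estimates of Theorem \ref{thm:tjurina-st} into a single equality, reducing the problem immediately to the already-known quotient bounds of Proposition \ref{prop:oldbounds}.

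First I would invoke K. Saito's theorem to conclude that $\mu_g=\tau_g$ whenever $g$ is quasi-homogeneous. Plugging this into the two-sided estimate
\[\tau_f\tau_g \;\leq\; \tau_{f+g} \;\leq\; \tau_f\tau_g+(\mu_f-\tau_f)(\mu_g-\tau_g)\]
from Theorem \ref{thm:tjurina-st}, the correction term on the right-hand side vanishes since $\mu_g-\tau_g=0$. Hence both inequalities become equalities, giving $\tau_{f+g}=\tau_f\tau_g=\tau_f\mu_g$.

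Next I would combine this with the Sebastiani--Thom identity $\mu_{f+g}=\mu_f\mu_g$ to compute
\[\frac{\mu_{f+g}}{\tau_{f+g}} \;=\; \frac{\mu_f\mu_g}{\tau_f\mu_g} \;=\; \frac{\mu_f}{\tau_f}.\]
Thus the quotient $\mu/\tau$ is preserved under joining with any quasi-homogeneous function. Applying Proposition \ref{prop:oldbounds}(1) in the plane-curve case yields $\mu_{f+g}/\tau_{f+g}<4/3$, and Proposition \ref{prop:oldbounds}(2) in the surface case gives $\mu_{f+g}/\tau_{f+g}<3/2$, as claimed.

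There is essentially no analytic obstacle in this argument: the quasi-homogeneity hypothesis is strong enough to make the upper bound of Theorem \ref{thm:tjurina-st} sharp, and the proposition then follows as a clean corollary. The genuine content is the structural observation that quasi-homogeneous stabilization preserves the quotient $\mu/\tau$ exactly, so previously established quotient bounds in low embedding dimension propagate to arbitrary embedding dimensions of the form $2+(n+1)$ or $3+(n+1)$. I would emphasize this preservation phenomenon as the conceptual takeaway of the proof.
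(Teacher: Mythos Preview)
Your proposal is correct and matches the paper's own proof essentially line for line: use $\mu_g=\tau_g$ from Saito's theorem, deduce $\tau_{f+g}=\tau_f\mu_g$ from Theorem~\ref{thm:tjurina-st}, conclude $\mu_{f+g}/\tau_{f+g}=\mu_f/\tau_f$, and apply Proposition~\ref{prop:oldbounds}. The only cosmetic difference is that you explicitly phrase the computation of $\tau_{f+g}$ as the collapse of the two-sided estimate, whereas the paper states the equality directly.
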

\begin{proof}
	Since \(g\) is quasi-homogeneous then \(\tau_g=\mu_g\) and \(\tau_{f+g}=\tau_f\mu_g.\) Then in both cases we have \(\mu_{f+g}/\tau_{f+g}=\mu_f/\tau_f.\) Therefore, the claim follows from Proposition \ref{prop:oldbounds}.
\end{proof}
Moreover, it is easy to find families for which the bounds of Proposition \ref{prop:newbounds2} are asymptotically sharp. In the case of plane curve singularities, consider \(f(x,y)=y^{n}-x^{n+1}+g(x,y)\) with \( \deg_w(f) < \deg_w(g) \) with respect to the weights \(w=(n,n+1).\) Moreover, choose \(g\) such that \(\tau_f=\tau_{min},\) i.e. the Tjurina number of \(f\) is minimal over all possible Tjurina numbers in a \(\mu\)--constant deformation of \(y^{n}-x^{n+1}.\) Then \cite{taumin}, 
\[\tau_{min}=\frac{3 n^2}{4}-1 \quad \textnormal{if} \quad n\ \textnormal{is even},\quad
\tau_{min}=\frac{3}{4}(n^2-1) \quad \textnormal{if} \quad n\ \textnormal{is odd}.\]
Therefore the join of \(f\) with any quasi-homogeneous function \(h\) in separated variables gives \[\lim_{n\rightarrow \infty}\mu_{f+h}/\tau_{f+h}=4/3.\]
In \cite[Example 3]{quotpat}, we showed a family of surface singularities in \(\mathbb{C}^{3}\) with \(\mu/\tau\rightarrow 3/2.\) Using that example, the same reasoning as before allows to construct an example where the bound of Proposition \ref{prop:newbounds2} (2) is asymptotically sharp.

To finish, the following Proposition also follows from a direct application of Theorem \ref{thm:tjurina-st} and Proposition \ref{prop:oldbounds}.
\begin{proposition}\label{prop:newbounds1}
	Let \(f(x_1,x_2,x_3)\) be an isolated surface singularity  in \(\mathbb{C}^3\) satisfying Durfee's conjecture and \(g(z_1,z_2)\) be a plane curve singularity. Then 
	\[\frac{\mu_{f+g}}{\tau_{f+g}}<2.\]
\end{proposition}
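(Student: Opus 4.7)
The plan is to reduce the statement to a direct combination of the lower bound part of Theorem \ref{thm:tjurina-st} with the two estimates in Proposition \ref{prop:oldbounds}. Since the hypersurface $f+g$ is of Sebastiani--Thom type in separated variables $\mathbf{x}=(x_1,x_2,x_3)$ and $\mathbf{z}=(z_1,z_2)$, the multiplicativity of the Milnor number gives $\mu_{f+g}=\mu_f\mu_g$ (already recorded in the introduction), and the lower bound in Theorem \ref{thm:tjurina-st} yields $\tau_{f+g}\geq \tau_f\tau_g$. Combining these two facts, I would write
\[
\frac{\mu_{f+g}}{\tau_{f+g}}\;\leq\;\frac{\mu_f\mu_g}{\tau_f\tau_g}\;=\;\frac{\mu_f}{\tau_f}\cdot\frac{\mu_g}{\tau_g}.
\]

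Next, I would apply Proposition \ref{prop:oldbounds} to each of the two factors: part (2) of that proposition gives $\mu_f/\tau_f<3/2$ under the assumption that $f$ satisfies Durfee's conjecture, and part (1) gives $\mu_g/\tau_g<4/3$ for any isolated plane curve singularity $g$. Multiplying the two strict inequalities yields
\[
\frac{\mu_{f+g}}{\tau_{f+g}}\;<\;\frac{3}{2}\cdot\frac{4}{3}\;=\;2,
\]
which is the desired conclusion.

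There is essentially no obstacle: all the work has already been done in Theorem \ref{thm:tjurina-st} and Proposition \ref{prop:oldbounds}, and the argument is a one-line combination of the multiplicative form of the Milnor number with the universal lower bound $\tau_{f+g}\geq\tau_f\tau_g$. The only minor point worth stressing in the write-up is that the strictness of the final inequality is preserved because both factors are strictly bounded, so no boundary case can spoil it. It is also worth noting that, in contrast to Proposition \ref{prop:newbounds2}, here one genuinely loses information by passing through the lower bound $\tau_f\tau_g$ rather than the exact formula, so the constant $2$ is not claimed to be sharp; asymptotic sharpness would require controlling the extra terms $\nu_1^{(f)}\nu_1^{(g)}$, $b_{f+g}$ and $u_{f+g}$ appearing in Theorem \ref{thm:tjurina-st}, which lies outside the scope of the present proposition.
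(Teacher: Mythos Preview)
Your proof is correct and follows essentially the same route as the paper: apply the lower bound $\tau_{f+g}\geq\tau_f\tau_g$ from Theorem~\ref{thm:tjurina-st}, use $\mu_{f+g}=\mu_f\mu_g$, and then bound the product $(\mu_f/\tau_f)(\mu_g/\tau_g)$ by $\tfrac{3}{2}\cdot\tfrac{4}{3}=2$ via Proposition~\ref{prop:oldbounds}. Your closing remark about the likely non-sharpness of the bound also matches the paper's subsequent discussion.
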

\begin{proof}
	By Theorem \ref{thm:tjurina-st} we have \(\tau_{f+g}\geq \tau_f\tau_g.\) Then, \(\mu_{f+g}/\tau_{f+g}\leq (\mu_f\mu_g)/(\tau_f\tau_g)<2\) where the last inequality follows by Proposition \ref{prop:oldbounds}. 
\end{proof}
In the case of Proposition \ref{prop:newbounds1}, since \(g\) is a plane curve singularity Corollary \ref{cor:maximaltau} shows that in fact we have 
\[\frac{\mu_{f+g}}{\tau_{f+g}}=\frac{\mu_f\mu_g}{\tau_f\tau_g+\nu_1^{(f)}(\mu_g-\tau_g)}.\]
In order to find a family where the bound of Proposition \ref{prop:newbounds1} is asymptotically sharp one should deal with the uncomfortable term \(\nu_1^{(f)}(\mu_g-\tau_g).\) Moreover, it is reasonable to think that the bound of Proposition \ref{prop:newbounds1} should not be asymptotically sharp.

\printbibliography
\end{document}